\newtheorem{thm}{Theorem}[section]
\newtheorem{lem}[thm]{Lemma}
\newtheorem{defn}[thm]{Definition}
\def\ll{{\mathcal L}}
\begin{document}
\title[On complex nilpotent Leibniz superalgebras of nilindex {\rm n+m}]
{On complex nilpotent Leibniz superalgebras \\ of nilindex {\rm
n+m}}
\author{L.M. Camacho, J. R. G\'{o}mez, B.A. Omirov and A.Kh. Khudoyberdiyev}
\address{[L.M. Camacho -- J.R. G\'{o}mez] Dpto. Matem\'{a}tica Aplicada I.
Universidad de Sevilla. Avda. Reina Mercedes, s/n. 41012 Sevilla.
(Spain)} \email{lcamacho@us.es --- jrgomez@us.es}
\address{[B.A. Omirov -- A.Kh. Khudoyberdiyev] Institute of Mathematics and Information Technologies
 of Academy of Uzbekistan, 29, F.Hodjaev srt., 100125, Tashkent (Uzbekistan)}
\email{omirovb@mail.ru --- khabror@mail.ru}


\thanks{Partially supported by the PAICYT, FQM143 of the Junta de Andaluc\'{\i}a (Spain).
The third author was supported by grant NATO-Reintegration ref.
CBP.EAP.RIG.983169}

\begin{abstract}
We present the description up to isomorphism of Leibniz
superalgebras with characteristic sequence $(n|m_1,\dots,m_k)$ and
nilindex $n+m,$ where $m=m_1+\cdots+m_k,$ $n$ and $m$ ($m\neq 0$)
are dimensions of even and odd parts, respectively.
\end{abstract} \maketitle

\textbf{Mathematics Subject Classification 2000}: 17A32, 17B30.

\textbf{Key Words and Phrases}: Lie superalgebras, Leibniz
superalgebras, nilindex, characteristic sequence.

\section{Introduction}

Extensive investigations of Lie algebras theory have lead to
appearance of more general algebraic objects, such as Mal'cev
algebras, Lie superalgebras, Leibniz algebras, and others.

The well-known Lie superalgebras are generalizations of Lie
algebras and for many years they attract the attention of both the
mathematicians and physicists. The systematical exposition of
basic Lie superalgebras theory can be found in the monograph
\cite{K} and the papers related with the nilpotent Lie
superalgebras are \cite{2007Yu}, \cite{GL1}, \cite{G-K-N}, and
\cite{G-K-N1}.

Leibniz superalgebras are the generalization of Leibniz algebras
and, on the other hand, they naturally generalize Lie
superalgebras.

Recall that Leibniz algebras are a "non antisymmetric"
generalization of Lie algebras \cite{loday}. The study of
nilpotent Leibniz algebras \cite{Alb}--\cite{Alb-Omirov-Rak} shows
that many nilpotent properties of Lie algebras can be extended for
nilpotent Leibniz algebras. The results of the nilpotent Leibniz
algebras may help us to investigate the nilpotent Leibniz
superalgebras.

In the description of Leibniz superalgebras structure the crucial
task is to prove  the existence of a suitable basis (so-called the
adapted basis) in which the multiplication of the superalgebra has
the most convenient form.

In contrast to Lie superalgebras for which problem of the
description of superalgebras with the maximal nilindex is
difficult \cite{G-K-N}, for nilpotent Leibniz superalgebras it
turns out to be comparatively easy and was solved in \cite{Alb}.
The distinctive property of such Leibniz superalgebras is that
they are single-generated. The next step - the description of
Leibniz superalgebras with the dimensions of even and odd parts,
respectively equal to $n$ and $m,$ and with nilindex $n+m$ at this
moment seems to be very complicated. Therefore, such Leibniz
superalgebras can be studied by applying restrictions on their
characteristic sequences \cite{FilSup}--\cite{G-N-O}. Following
this approach we investigate such Leibniz superalgebras with
characteristic sequence $C(L)=(n\ |\ m_1,\dots, m_k)$, where
$m_1+\dots+m_k=m.$

Taking into account the results of the papers \cite{C-G-N-O} and
\cite{G-N-O}, where some cases of nilpotent Leibniz superalgebras
were described, in this work we investigate the rest cases. Thus,
we complete the description of Leibniz superalgebras with
characteristic sequence $C(L)=(n\ |\ m_1,\dots, m_k)$ and nilindex
$n+m.$

All over the work we consider spaces and algebras over the field
of complex numbers. By asterisks $(*)$ we denote the appropriate
coefficients at the basic elements of superalgebra.

\section{Preliminaries}

A $\mathbb{Z}_2$-graded vector space
$\mathcal{G}=\mathcal{G}_0\oplus \mathcal{G}_1$ is called {\it a
Lie superalgebra} if it is equipped with a product $[-,-]$ which
satisfies the following conditions:

1. $[\mathcal{G}_\alpha,\mathcal{G}_\beta]\subseteq
\mathcal{G}_{\alpha+\beta(mod\ 2)}$,

2. $[x,y]=-(-1)^{\alpha\beta}[y,x],$

3. $(-1)^{\alpha\gamma}
[x,[y,z]]+(-1)^{\alpha\beta}[y,[z,x]]+(-1)^{\beta\gamma}[z,[x,y]]=0$
-- {\it Jacobi superidentity}\\
for any $x\in \mathcal{G}_\alpha,$ $y\in \mathcal{G}_\beta,$ $z\in
\mathcal{G}_\gamma$ and $\alpha,\beta, \gamma\in \mathbb{Z}_2.$

A $\mathbb{Z}_2$-graded vector space
$\mathcal{L}=\mathcal{L}_0\oplus \mathcal{L}_1$ is called {\it a
Leibniz superalgebra} if it is equipped with a product $[-, -]$
which satisfies the following conditions:

1. $[\mathcal{L}_\alpha,\mathcal{L}_\beta]\subseteq
\mathcal{L}_{\alpha+\beta(mod\ 2)},$

2. $[x, [y, z]]=[[x, y], z] - (-1)^{\alpha\beta} [[x, z], y] -$
  {\it Leibniz superidentity},\\
 for any $x\in \mathcal{L},$ $y \in \mathcal{L}_\alpha,$ $z \in
 \mathcal{L}_\beta$ and $\alpha,\beta\in \mathbb{Z}_2.$

The vector spaces $\mathcal{L}_0$ and $\mathcal{L}_1$ are said to
be even and odd parts of the superalgebra $\mathcal{L}$,
respectively.

Note that if in $\mathcal{L}$ the graded identity
$[x,y]=-(-1)^{\alpha\beta} [y,x]$ holds, then the Leibniz
superidentity and  Jacobi superidentity coincide. Thus, Leibniz
superalgebras are a generalization of Lie superalgebras.

For examples of Leibniz superalgebras we refer to \cite{Alb}.

Denote by $Leib^{n,m}$ the set of Leibniz superalgebras with
dimensions of the even part and the odd part equal to $n$ and $m,$
respectively.

Let $V=V_0\oplus V_1,$ $W=W_0\oplus W_1$ be two
$\mathbb{Z}_2$-graded spaces. A linear map $f : V \to W$ is called
of degree $\alpha$ (denoted as $deg(f)=\alpha$), if
$f(V_\beta)\subseteq W_{\alpha +\beta}$ for all $\beta\in
\mathbb{Z}_2.$

Let $\mathcal{L}$ and $\mathcal{L}'$ be Leibniz superalgebras. A
linear map $f : \mathcal{L}\to \mathcal{L}'$ is called \emph{a
homomorphism} of Leibniz superalgebras if

1. $f(\mathcal{L}_0)\subseteq \mathcal{L}_0'$ and
$f(\mathcal{L}_1)\subseteq \mathcal{L}_1',$ i.e. $deg(f)=0$;

2. $f([x,y])=[f(x),f(y)]$ for all $x,y\in \mathcal{L}.$\\
Moreover, if  $f$ is bijection then it is called \emph{an
isomorphism} of Leibniz superalgebras $\mathcal{L}$ and
$\mathcal{L}'.$

For a given Leibniz superalgebra $\mathcal{L}$ we define a
descending central sequence in the following way:
$$
\mathcal{L}^1=\mathcal{L},\quad
\mathcal{L}^{k+1}=[\mathcal{L}^k,\mathcal{L}^1], \quad k \geq 1.
$$

\begin{defn} A Leibniz superalgebra $\mathcal{L}$ is called
nilpotent, if there exists  $s\in\mathbb N$ such that
$\mathcal{L}^s=0.$ The minimal number $s$ with this property is
called nilindex of the superalgebra $\mathcal{L}.$
\end{defn}

The sets $$\mathcal{R(L)}=\left\{ z\in \mathcal{L}\ |\
[\mathcal{L}, z]=0\right\} \mbox{and} \ \mathcal{Z(L)}=\left\{
z\in \mathcal{L}\ |\ [\mathcal{L}, z]=[z, \mathcal{L}]=0\right\}$$
are called {\it the right annihilator} and {\it the center of a
superalgebra} $\mathcal{L},$ respectively.

Using the Leibniz superidentity it is not difficult to see that
$\mathcal{R(L)}$ is an ideal of the superalgebra $\ll$. Moreover,
the elements of the form $[a,b]+(-1)^{\alpha \beta}[b,a],$ ($a \in
\ll_{\alpha}, \ b \in \ll_{\beta}$) belong to $\mathcal{R}(\ll)$.

The following theorem describes the nilpotent Leibniz
superalgebras with maximal nilindex.

\begin{thm} \label{t1} \cite{Alb} Let $\mathcal{L}$  be a Leibniz superalgebra of the variety
$Leib^{n,m }$ with nilindex equal to $n+m+1.$ Then $\mathcal{L}$
is isomorphic to one of the following non-isomorphic
superalgebras:
$$
[e_i,e_1]=e_{i+1},\ 1\le i\le n-1;\quad \left\{ \begin{array}{ll} [e_i,e_1]=e_{i+1},& 1\le i\le n+m-1, \\
{[}e_i,e_2{]}=2e_{i+2}, & 1\le i\le n+m-2,\\ \end{array}\right.
$$
(omitted products are equal to zero).
\end{thm}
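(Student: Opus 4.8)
The plan is to exploit the single-generated structure that maximal nilindex forces. Since $\dim\mathcal L=n+m$ and $\mathcal L^{n+m+1}=0$ while $\mathcal L^{n+m}\neq 0$, the descending central sequence must drop in dimension by exactly one at each step: $\dim\mathcal L^k-\dim\mathcal L^{k+1}=1$ for $1\le k\le n+m$. In particular $\dim(\mathcal L/\mathcal L^2)=1$, so $\mathcal L$ is generated by a single element, and a standard Nakayama-type argument shows that a generator can be chosen to be homogeneous — either even or odd. These two cases will produce the two families in the statement. First I would fix a homogeneous generator $x$, and observe that each quotient $\mathcal L^k/\mathcal L^{k+1}$ is one-dimensional, spanned by the class of a suitable iterated bracket; this lets us build a basis $e_1=x,\,e_2,\dots,e_{n+m}$ adapted to the filtration, with $e_{i+1}\in\mathcal L^{i+1}$ representing the generator of $\mathcal L^{i}/\mathcal L^{i+1}$.

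Next I would pin down the multiplication table in this basis. Because $\mathcal L$ is single-generated, every $\mathcal L^k$ is spanned over $\mathcal L^{k+1}$ by brackets of the form $[\cdots[[e_1,e_1],\ast],\dots]$; using the Leibniz superidentity to push all inner brackets to the left, one shows that the product $[e_i,e_1]$ already generates $\mathcal L^{i+1}$ modulo $\mathcal L^{i+2}$, so after rescaling the basis vectors we may assume $[e_i,e_1]=e_{i+1}$ for the appropriate range of $i$. The case analysis now splits according to the parity of the generator $e_1$. If $\deg e_1=1$ (odd generator), then $[e_1,e_1]\in\mathcal L^2$ is even and need not vanish; tracking the $\mathbb Z_2$-grading along the chain $e_1,e_2=[e_1,e_1],e_3,\dots$ shows the parities alternate, and one must determine the remaining products $[e_i,e_2]$. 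Here the Leibniz superidentity applied to the triple $(e_i,e_1,e_1)$, together with the already-established relations $[e_j,e_1]=e_{j+1}$, forces $[e_i,e_2]=2e_{i+2}$ (the factor $2$ coming from $[e_i,[e_1,e_1]]=[[e_i,e_1],e_1]-(-1)^{\,1\cdot1}[[e_i,e_1],e_1]$... more precisely from expanding $[e_i,[e_1,e_1]]$ and using that $e_1$ is odd). If instead $\deg e_1=0$ (even generator), then $[e_1,e_1]\in\mathcal R(\mathcal L)$ by the remark preceding the theorem, and one checks that the odd part can only be reached if $[e_1,e_1]=0$, whence the chain is built purely by right multiplication by $e_1$ and no second relation survives; this yields the first superalgebra, with the constraint that the single odd basis vector sits at the top of the filtration so that $m$ is forced (the family is really the one-parameter-free table $[e_i,e_1]=e_{i+1}$ with a prescribed position of the grading jump).

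The main obstacle I anticipate is the bookkeeping of the $\mathbb Z_2$-grading through the filtration: one must verify that in the odd-generator case the parities of $e_1,e_2,\dots,e_{n+m}$ are consistent with a decomposition into an $n$-dimensional even part and an $m$-dimensional odd part (and similarly constrain where the grading can jump in the even-generator case), and then confirm that all the remaining products $[e_i,e_j]$ with $j\ge 3$ are determined — in fact zero — by repeated use of the Leibniz superidentity. A secondary point requiring care is checking that the two resulting tables genuinely satisfy the Leibniz superidentity (so they are bona fide superalgebras of nilindex exactly $n+m+1$) and that they are non-isomorphic, which follows since any isomorphism preserves the parity of a generator and the two tables have generators of different parity.
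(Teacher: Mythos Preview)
The paper does not prove this theorem: it is quoted from \cite{Alb} and used as a black box. So there is no ``paper's own proof'' to compare against, and your proposal is in effect an attempt to reconstruct the argument of \cite{Alb}.

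Your overall strategy is the right one, and the odd-generator case is handled correctly: the dimension count forces $\dim(\mathcal L/\mathcal L^2)=1$; the quotient is graded, so a homogeneous generator exists; and when $e_1$ is odd the Leibniz superidentity with $y=z=e_1$ gives $[e_i,e_2]=[e_i,[e_1,e_1]]=2[[e_i,e_1],e_1]=2e_{i+2}$, while $[e_i,e_j]=0$ for $j\ge 3$ follows by a straightforward induction using the same identity. The parity bookkeeping then yields $m=n$ or $m=n+1$, exactly as the paper notes after the statement.

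Your treatment of the even-generator case, however, is confused. If $e_1\in\mathcal L_0$ generates $\mathcal L$, then every iterated bracket of $e_1$ with itself lies in $\mathcal L_0$, so $\mathcal L_1=0$ and $m=0$ outright; there is no ``grading jump'' and no ``single odd basis vector at the top of the filtration''. The sentence ``the odd part can only be reached if $[e_1,e_1]=0$'' is wrong on both ends: the odd part can never be reached from an even generator, and $[e_1,e_1]=0$ would force $\mathcal L^2=0$, contradicting nilindex $n+m+1>2$. What actually happens is that $e_2=[e_1,e_1]\in\mathcal R(\mathcal L)$ (since $2[e_1,e_1]=[e_1,e_1]+(-1)^0[e_1,e_1]\in\mathcal R(\mathcal L)$), and then inductively $e_{j}\in\mathcal R(\mathcal L)$ for all $j\ge 2$, giving $[e_i,e_j]=0$ for $j\ge 2$; this is the null-filiform Leibniz algebra, i.e.\ the first superalgebra in the list with $m=0$. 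Once you correct this case the argument goes through.
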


It should be noted that for the second superalgebra we have $m=n$
when $n+m$ is even and $m=n+1$ if $n+m$ is odd. Moreover, it is
clear that the Leibniz superalgebra has the maximal nilindex if
and only if it is single-generated.

Let $\mathcal{L}=\mathcal{L}_0\oplus \mathcal{L}_1$ be a nilpotent
Leibniz superalgebra. For an arbitrary element $x\in L_0,$ the
operator of right multiplication $R_x$ is a nilpotent endomorphism
of the space $\mathcal{L}_i,$ where $i\in \{0, 1\}.$ Denote by
$C_i(x)$ ($i\in \{0, 1\}$) the descending sequence of the
dimensions of Jordan blocks of the operator $R_x.$ Consider the
lexicographical order on the set $C_i(\mathcal{L}_0)$.

\begin{defn} A sequence
$$
C(\mathcal{L})=\left( \left.\max\limits_{x\in L_0\setminus
[\mathcal{L}_0,\mathcal{L}_0]} C_0(x)\ \right|\
\max\limits_{\widetilde x\in \mathcal{L}_0\setminus
[\mathcal{L}_0,\mathcal{L}_0]} C_1\left(\widetilde x\right)
\right)
$$
is said to be the characteristic sequence of the Leibniz
superalgebra $\mathcal{L}.$
\end{defn}

Similar as in case of Lie superalgebras \cite{GL} (Corollary
3.0.1) it can be proved that the characteristic sequence is an
invariant under isomorphisms.

For Leibniz superalgebras we introduce the analogue of the
zero-filiform Leibniz algebras.
\begin{defn}
A Leibniz superalgebra $\ll\in$ $Leib^{n,m}$ is called {\it
zero-filiform} if \ $C(\ll)=(n|m)$
\end{defn}

Denote by $\mathcal{ZF}^{n,m}$ the set of all zero-filiform
Leibniz superalgebras from $Leib^{n,m}.$

From \cite{Omirov2} it can be concluded that the even part of a
zero-filiform Leibniz superalgebra is a zero-filiform Leibniz
algebra, therefore a zero-filiform superalgebra is not a Lie
superalgebra.

Further, we need the result on existence of an adapted basis for
zero-filiform Leibniz superalgebras.

\begin{thm}\cite{G-N-O} \label{t29}
In an arbitrary superalgebra from $\mathcal{ZF}^{n,m}$ there
exists a basis $\{x_1,x_2,\dots,x_n,y_1,y_2,\dots,y_m\}$ which
satisfies the following conditions:
$$\left\{\begin{array}{ll}
[x_i,x_1]=x_{i+1},&1\leq i \leq n-1,\\{} [x_n,x_1]=0,&\\{}
[x_i,x_k]=0,&1\leq i\leq n,\ 2\leq k\leq n,\\{} [y_j,x_1]=y_{j+1},
&1\leq j\leq m-1,\\{} [y_m,x_1]=0,&\\{} [y_j,x_k]=0,& 1\leq j\leq
m,\ 2\leq k\leq n.
\end{array}\right.$$
\end{thm}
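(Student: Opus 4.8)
The plan is to produce an adapted basis for a zero-filiform Leibniz superalgebra $\ll = \ll_0 \oplus \ll_1$ in two stages: first control the even part, then use the Leibniz superidentity to normalize the action of $\ll_0$ on $\ll_1$. Since $C(\ll)=(n\mid m)$, the definition of the characteristic sequence means there is an element $x_1 \in \ll_0 \setminus [\ll_0,\ll_0]$ whose right-multiplication operator $R_{x_1}$ restricted to $\ll_0$ consists of a single Jordan block of size $n$. By the remark quoted in the Preliminaries (that $\ll_0$ of a zero-filiform superalgebra is itself a zero-filiform Leibniz algebra, via \cite{Omirov2}), and the known structure of zero-filiform Leibniz algebras, we may choose the basis $x_1,\dots,x_n$ of $\ll_0$ so that $[x_i,x_1]=x_{i+1}$ for $1\le i\le n-1$ and $[x_n,x_1]=0$; this is the content of the first three displayed relations. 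The key point from the Leibniz-algebra side is that $\ll_0$ being single-generated forces all products $[x_i,x_k]$ with $k\ge 2$ to vanish, because $x_k \in [\ll_0,\ll_0]=\ll_0^2$ for $k\ge 2$ and right multiplication by elements of $\ll_0^2$ is zero in a zero-filiform Leibniz algebra (one checks this inductively from the Leibniz identity on $[x_i,[x_{k-1},x_1]]$).

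Next I would set up the odd part. Again by the definition of $C(\ll)=(n\mid m)$, there is an element $\widetilde x \in \ll_0\setminus[\ll_0,\ll_0]$ such that $R_{\widetilde x}$ acting on $\ll_1$ is a single Jordan block of size $m$. The first subtle point is that one wants to use \emph{the same} generator for both parts. Writing $\widetilde x = \alpha x_1 + (\text{element of } \ll_0^2)$ with $\alpha\ne 0$, and observing that right multiplication on $\ll_1$ by an element of $\ll_0^2$ is again nilpotent of lower order (here one uses the Leibniz superidentity $[u,[v,w]]=[[u,v],w]-[[u,w],v]$ with $u\in\ll_1$, $v,w\in\ll_0$ to show $R_{[v,w]} = R_v R_w - R_w R_v$ on $\ll_1$, hence $R_{\ll_0^2}\subseteq [R_{\ll_0},R_{\ll_0}]$ is nilpotent of smaller index on each generalized eigenspace), one concludes that $R_{x_1}$ itself already has a Jordan block of size $m$ on $\ll_1$; after rescaling we may assume $\widetilde x = x_1$. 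Then pick $y_1\in\ll_1$ a cyclic vector for $R_{x_1}|_{\ll_1}$ and set $y_{j+1}=[y_j,x_1]$ for $1\le j\le m-1$, giving $[y_m,x_1]=0$ automatically since the block has size $m$ and $\dim\ll_1=m$.

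It remains to show $[y_j,x_k]=0$ for all $j$ and all $k\ge 2$. Here I would argue exactly as in the even case: $x_k\in\ll_0^2$ for $k\ge 2$, so it suffices to show $[\ll_1,\ll_0^2]=0$, and this follows from the Leibniz superidentity $[y_j,[x_{k-1},x_1]]=[[y_j,x_{k-1}],x_1]-[[y_j,x_1],x_{k-1}]$ together with a downward induction on $k$ (base case $[y_j,x_2]$) using that the right-hand side lands in span of terms $[y_\ell,x_{k-1}]$ and $[y_\ell,x_1]$ with the $x_1$-action shifting indices; combined with the single-block structure this forces the products to vanish. The main obstacle I anticipate is precisely the bookkeeping that lets one use a common generator $x_1$ for the two Jordan blocks — showing that the generic nilpotent operator on $\ll_0$ can be taken to also realize the maximal block on $\ll_1$ — rather than the (routine) vanishing arguments for products with $x_k$, $k\ge 2$. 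One should also double-check that after all these normalizations the Leibniz superidentity imposes no further constraints among the listed relations, i.e. that the resulting structure constants are genuinely consistent, which is a finite check on the quadratic relations.
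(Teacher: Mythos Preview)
The paper does not prove this theorem; it is quoted from \cite{G-N-O} without argument, so there is no in-paper proof to compare against.

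Your outline is broadly sound, but the step you flag as the ``main obstacle'' --- arranging a \emph{common} generator $x_1$ for both Jordan blocks --- contains a gap. The claim that $R_{\ll_0^2}\subseteq [R_{\ll_0},R_{\ll_0}]$ is ``nilpotent of smaller index on each generalized eigenspace'' is neither precise nor justified: commutators of nilpotent operators need not have smaller nilpotency index, and here the only generalized eigenspace is all of $\ll_1$. The correct fix is much simpler than what you attempt. Once $\ll_0$ is known to be zero-filiform with $[x_i,x_1]=x_{i+1}$ and $[x_i,x_k]=0$ for $k\ge 2$, one has $R_z|_{\ll_0}=0$ for every $z\in\ll_0^2$, so \emph{every} element of $\ll_0\setminus\ll_0^2$ already realizes the size-$n$ block on $\ll_0$. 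Hence the element $\widetilde{x}\in\ll_0\setminus\ll_0^2$ that achieves the size-$m$ block on $\ll_1$ automatically achieves the size-$n$ block on $\ll_0$ as well; take it as the new $x_1$ and rebuild the even basis accordingly.

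Two minor corrections: your induction for $[y_j,x_k]=0$ should go \emph{upward} in $k$, starting from the base case
\[
[y_j,x_2]=[y_j,[x_1,x_1]]=[[y_j,x_1],x_1]-[[y_j,x_1],x_1]=0,
\]
and then $[y_j,x_{k+1}]=[y_j,[x_k,x_1]]=[[y_j,x_k],x_1]-[[y_j,x_1],x_k]=0$ by the inductive hypothesis. Also, the operator identity has a sign error: the Leibniz superidentity gives $R_{[v,w]}=R_wR_v-R_vR_w$, not $R_vR_w-R_wR_v$.
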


\section{Zero-filiform Leibniz superalgebra with nilindex equal to n+m}

This section is devoted to the description of zero-filiform
Leibniz superalgebras with nilindex equal to $n+m$.

 Let $\ll \in \mathcal{ZF}^{n,m}$ with nilindex equal to $n+m.$
 Evidently, $\ll$ has two generators. Moreover, from Theorem \ref{t29} it
 follows that one generator lies in $\ll_0$ and the second generator lies in
 $\ll_1$. Without loss of generality it can be assumed that in an
 adapted basis the generators are $x_1$ and $y_1.$

In the adapted basis of $\ll$ we introduce the notations.
$$[x_i,y_1]=\sum\limits_{j=2}^m \alpha_{i,j} y_j, \ 1 \leq i \leq
n, \ \ \  [y_i,y_1]=\sum\limits_{j=2}^n \beta_{i,j} x_j, \ 1 \leq
i \leq m.$$

In the above notation the following lemma holds.

\begin{lem}\label{family}
$$[y_i,y_j]=\sum_{s=0}^{min\{i+j-1,m\}-i}(-1)^s C_{j-1}^{s}\sum_{t=2}^{n-j+s+1}\beta_{i+s,t}
x_{t+j-s-1}, \eqno(1)$$ where $1\leq i,j\leq m$.
\end{lem}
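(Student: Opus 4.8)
The plan is to prove formula (1) by induction on $j$. The base case $j=1$ is immediate: the sum over $s$ reduces to the single term $s=0$, giving $[y_i,y_1]=\sum_{t=2}^{n}\beta_{i,t}x_t$, which is exactly the definition of the $\beta_{i,t}$ (after noting that the index $t$ naturally ranges up to $n$). For the inductive step, I would assume (1) holds for some $j$ with $1\le j\le m-1$ and compute $[y_i,y_{j+1}]$. The key tool is the Leibniz superidentity applied to the triple $(y_i,y_j,x_1)$: since $[y_j,x_1]=y_{j+1}$ by Theorem \ref{t29}, we get
$$[y_i,y_{j+1}]=[y_i,[y_j,x_1]]=[[y_i,y_j],x_1]-(-1)^{|y_j|\cdot|x_1|}[[y_i,x_1],y_j]=[[y_i,y_j],x_1]-[y_{i+1},y_j],$$
where we used $|x_1|=0$ so the sign is $+1$, and $[y_i,x_1]=y_{i+1}$ (valid for $i\le m-1$; the case $i=m$ must be checked separately, but there $[y_m,x_1]=0$ so that term simply vanishes and the formula should still be consistent).

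The heart of the argument is then a bookkeeping computation: substitute the inductive expression for $[y_i,y_j]$ into $[[y_i,y_j],x_1]$, use $[x_{t+j-s-1},x_1]=x_{t+j-s}$ (with the convention that this is $0$ once the index exceeds $n$), substitute the inductive expression for $[y_{i+1},y_j]$ directly, and then reindex and collect the coefficient of each basis element $x_{t+j-s}$ (equivalently, of $x_{\ell}$ for each $\ell$). The binomial coefficients should recombine via the Pascal identity $C_{j-1}^{s}+C_{j-1}^{s-1}=C_{j}^{s}$: the term $[[y_i,y_j],x_1]$ contributes the "$C_{j-1}^{s}$ from position $s$" piece after the shift, while $-[y_{i+1},y_j]$ (whose coefficients involve $\beta_{i+1+s',t}$) contributes, after matching $\beta_{i+s,\cdot}$ by setting $s=s'+1$, the "$-(-1)^{s-1}C_{j-1}^{s-1}$" piece; the signs align because $-(-1)^{s-1}=(-1)^s$, and together these give $(-1)^s C_j^s$, which is the coefficient predicted by (1) with $j$ replaced by $j+1$.

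The main obstacle will be controlling the summation limits. The upper limit $\min\{i+j-1,m\}-i$ on $s$, the inner limit $n-j+s+1$ on $t$, and the truncation conventions (products $[x_p,x_1]$ vanishing for $p\ge n$, and $[y_q,x_1]$ vanishing for $q\ge m$) all interact, and one must verify that the reindexing does not silently drop or add boundary terms — in particular that the $s=0$ term from $[[y_i,y_j],x_1]$ and the top term of $-[y_{i+1},y_j]$ land correctly, and that when $i+j-1\ge m$ the extra terms that would formally appear are exactly the ones killed by $[y_m,x_1]=0$. I would handle this by treating the generic range first, then separately checking the two extreme indices ($i=m$, and $s$ at its maximal value), confirming in each case that the vanishing relations from Theorem \ref{t29} make the Pascal recombination go through without leftover terms.
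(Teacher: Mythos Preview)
Your proposal is correct and follows exactly the approach indicated in the paper: the paper's proof consists of the single sentence ``The proof is deduced by induction on $j$ at any value of $i$,'' and you have supplied precisely that induction, with the Leibniz superidentity on the triple $(y_i,y_j,x_1)$ giving the recursion $[y_i,y_{j+1}]=[[y_i,y_j],x_1]-[y_{i+1},y_j]$ and Pascal's identity $C_{j-1}^{s}+C_{j-1}^{s-1}=C_j^{s}$ recombining the coefficients. Your attention to the boundary cases (the vanishing of $[x_n,x_1]$ and $[y_m,x_1]$, and the behaviour of the upper limit $\min\{i+j-1,m\}-i$) is appropriate and fills in the details the paper omits.
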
 \begin{proof}
The proof is deduced by induction on $j$ at any value of $i$.
\end{proof}
Since in the work \cite{G-N-O} the set $\mathcal{ZF}^{n,2}$ was
already described, we consider the set $\mathcal{ZF}^{n,m} (m\geq
3).$

\

{\bf{Case}} $\mathcal {ZF}^{2,m} \ (m\geq 3).$

\

\begin{thm}\label{thm32}
Let $\ll$ be a Leibniz superalgebra with the nilindex $m+2$ from
$\mathcal{ZF}^{2,m} \ (m\geq 3).$ Then $m$ is odd and $\ll$ is
isomorphic to the following superalgebra:
$$\begin{array}{ll}
[x_1,x_1]=x_2,&\\{} [y_i,x_1]=y_{i+1},& 1\leq i\leq m-1,\\{}
[x_1,y_i]=-y_{i+1},&1\leq i\leq m-1,\\{}
[y_i,y_{m+1-i}]=(-1)^{j+1}x_2, & 1\leq i\leq m-1.
\end{array}$$
\end{thm}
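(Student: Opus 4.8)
The plan is to work throughout in the adapted basis $\{x_1,x_2,y_1,\dots,y_m\}$ given by Theorem~\ref{t29}; for $n=2$ this means $[x_1,x_1]=x_2$ and all remaining brackets among the $x_i$ are zero. Write $\beta_i:=\beta_{i,2}$. First I would observe that for $n=2$ the sum in Lemma~\ref{family} reduces to its single term $s=j-1$, $t=2$, so that
\[
[y_i,y_j]=(-1)^{j-1}\beta_{i+j-1}\,x_2\quad(i+j\le m+1),\qquad
[y_i,y_j]=0\quad(i+j> m+1);
\]
thus the odd-by-odd multiplication is governed by the single scalar sequence $\beta_1,\dots,\beta_m$. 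I would also record that $R_{x_1}$ acts on $\ll_1$ as one nilpotent Jordan block with cyclic vector $y_1$, so $\ll_1$ is a cyclic $\mathbb{C}[R_{x_1}]$-module with $R_{x_1}^{m}=0$; hence, for a polynomial $q$, one has $q(R_{x_1})y_1=0$ exactly when $t^{m}\mid q(t)$, and every operator on $\ll_1$ commuting with $R_{x_1}$ is a polynomial in $R_{x_1}$. Writing $L_x$ for left multiplication by $x$, computing $[x_2,y_k]=[x_2,[y_{k-1},x_1]]$ and $[x_1,y_k]=[x_1,[y_{k-1},x_1]]$ by the Leibniz superidentity gives $L_{x_2}R_{x_1}=R_{x_1}L_{x_2}$ and $L_{x_1}R_{x_1}-R_{x_1}L_{x_1}=-L_{x_2}$ on $\ll_1$; hence $L_{x_2}=p(R_{x_1})$ with $p(t)=\sum_{j\ge2}\alpha_{2,j}t^{j-1}$, and, once $L_{x_2}=0$ is established, $L_{x_1}=p_1(R_{x_1})$ with $p_1(t)=\sum_{j\ge2}\alpha_{1,j}t^{j-1}$.

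Next I would use the nilindex. As $\ll$ has two generators and $\ll^2=\langle x_2,y_2,\dots,y_m\rangle$ has dimension $m$ (clear from the adapted relations), the hypothesis that the nilindex is $m+2$ forces $\dim\ll^{k}=m+2-k$ for all $k\ge2$ (the central series must descend strictly by one at each step from $\ll^2$ on). From $[x_2,[x_2,y_1]]=0$ we get $p(R_{x_1})^{2}y_1=0$, i.e.\ $t^{m}\mid p(t)^{2}$, and since $m\ge3$ this forces $\alpha_{2,2}=0$, so $[x_2,\ll_1]\subseteq\langle y_3,\dots,y_m\rangle$. Then $x_2$ can enter $\ll^3$ only through a bracket $[y_i,y_j]$ with $i\ge2$, so $\dim\ll^3=m-1$ forces $\beta_l\ne0$ for some $l\ge2$. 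Now the Leibniz superidentity applied to $[y_1,y_1]=\beta_1 x_2$ yields the operator identity $\beta_1 R_{x_2}=2R_{y_1}^{2}$, and evaluating it on $y_j$ gives $0=2\beta_j[x_2,y_1]$ for all $j$ (because $[y_j,x_2]=0$); together with the $\beta_l\ne0$ just obtained this forces $[x_2,y_1]=0$, hence $[x_2,\ll_1]=0$. Running through the central series again then gives $\ll^{k}=\langle x_2,y_k,\dots,y_m\rangle$ for $3\le k\le m$ (using $\dim\ll^k=m+2-k$ at each step) and $\ll^{m+1}=\langle\beta_m x_2\rangle$, so $\dim\ll^{m+1}=1$ forces $\beta_m\ne0$.

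The remaining stage is normalization. Since $[x_2,\ll_1]=0$ we have $[x_1,y_j]=R_{x_1}^{\,j-1}[x_1,y_1]$, and $[x_1,[x_1,y_1]]=-[[x_1,y_1],x_1]$ yields $t^{m}\mid p_1(t)\bigl(p_1(t)+t\bigr)$; comparing coefficients successively forces $\alpha_{1,2}\in\{0,-1\}$ and $\alpha_{1,j}=0$ for $3\le j\le m-1$, so $[x_1,y_1]=\alpha_{1,2}y_2+\alpha_{1,m}y_m$. The identity $[y_i,[x_1,y_1]]=\beta_{i+1}x_2$ then gives $(1+\alpha_{1,2})\beta_j=0$ for $j\ge3$ and $(-1)^{m-1}\alpha_{1,m}\beta_m=0$; since $\beta_m\ne0$, the first rules out $\alpha_{1,2}=0$ and the second forces $\alpha_{1,m}=0$, so $[x_1,y_1]=-y_2$ and $[x_1,y_j]=-y_{j+1}$ for $1\le j\le m-1$. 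Finally $[x_1,[y_i,y_j]]=0$, combined with $[x_1,y_i]=-y_{i+1}$, gives $\bigl((-1)^{i-1}+(-1)^{j-1}\bigr)\beta_{i+j}=0$ whenever $i+j\le m$, so $\beta_k=0$ for every even $k\le m$; if $m$ were even this would give $\beta_m=0$, contradicting the step above, so $m$ is odd and $\beta_2=\beta_4=\dots=\beta_{m-1}=0$. A base change $y_1\mapsto y_1+\sum\mu_j y_j$ (the sum over odd $j$ with $3\le j\le m$; this preserves the adapted relations and all identities found above) can then be chosen to annihilate $\beta_1,\beta_3,\dots,\beta_{m-2}$, and a rescaling $y_1\mapsto c y_1$ normalizes $\beta_m$ to $1$; as $m$ is odd, the surviving products are then $[y_i,y_{m+1-i}]=(-1)^{m-i}x_2=(-1)^{i+1}x_2$, giving the superalgebra of the statement. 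One finally checks directly that this superalgebra satisfies the Leibniz superidentity and has characteristic sequence $(2|m)$ and nilindex $m+2$.

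I expect the real difficulty to lie in the middle stage: arranging the Leibniz identities and the dimensions of the $\ll^{k}$ so that together they force $[x_2,\ll_1]=0$ and $\beta_m\ne0$, and in particular so that they exclude the branches which survive the dimension count by itself (most notably $\alpha_{2,2}\ne0$, and $[x_2,y_1]\ne0$ with $\beta_m=0$). The device that makes this manageable is the module-theoretic reformulation above — viewing $L_{x_1}$ and $L_{x_2}$ as polynomials in the single Jordan block $R_{x_1}$, and the odd-by-odd product as the scalar sequence $(\beta_i)$ — which turns each of these obstructions into a divisibility statement in $\mathbb{C}[t]/(t^{m})$; the parity condition on $m$ then drops out of the vanishing of $[x_1,[y_i,y_j]]$ for $i\equiv j\pmod2$.
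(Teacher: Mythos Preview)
Your argument is correct and reaches the same conclusion as the paper, but the organization is genuinely different. The paper proceeds in the order: first it shows $\beta_{m}\neq0$ by a short contradiction at the top of the central series (if $\beta_m=0$ one is forced into $[y_{m-1},y_1]=ax_2$, $[x_2,y_1]=by_m$ with $ab\neq0$, and then $aby_m=\tfrac12[y_{m-1},[y_1,y_1]]=0$); it then reads off $x_2\in\ll^{m+1}\subseteq\mathcal{Z}(\ll)$ directly, and determines the $\alpha_{1,j}$ by the right-annihilator trick, namely that $[y_1,x_1]+[x_1,y_1]\in\mathcal R(\ll)$, so any nonzero such combination, after repeated right multiplication by $x_1$, would put $y_m$ in $\mathcal R(\ll)$ and contradict $[y_1,y_m]=(-1)^{m-1}\beta_m x_2\neq0$. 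Your route instead views $\ll_1$ as a cyclic $\mathbb C[R_{x_1}]$-module, expresses $L_{x_1},L_{x_2}$ as polynomials in $R_{x_1}$, and replaces each of these steps by a divisibility statement in $\mathbb C[t]/(t^m)$: $t^m\mid p(t)^2$ gives $\alpha_{2,2}=0$; the identity $0=2\beta_j[x_2,y_1]$ (from $2R_{y_1}^2=\beta_1R_{x_2}$) kills $[x_2,\ll_1]$; stepping through the $\ll^k$ then forces $\beta_m\neq0$; and $t^m\mid p_1(t)(p_1(t)+t)$ together with $[y_i,[x_1,y_1]]=\beta_{i+1}x_2$ pins down $\alpha_{1,2}=-1$, $\alpha_{1,j}=0$ for $j\ge3$. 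The parity of $m$ and the final base change are handled essentially the same way in both proofs.

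What each approach buys: the paper's argument is shorter and more elementary at the crucial point---it gets $x_2$ central in one line from $x_2\in\ll^{m+1}$ rather than via your two-step deduction, and the right-annihilator argument dispatches all the $\alpha_{1,j}$ at once. Your module-theoretic packaging, on the other hand, makes the logical dependencies transparent (each obstruction becomes a polynomial divisibility), and would scale more gracefully if one had to repeat this for larger $n$. One place where you should add a line is the final normalization: you assert that the $\mu_j$ can be chosen to annihilate $\beta_1,\beta_3,\dots,\beta_{m-2}$, but you do not verify solvability; the paper gives the explicit recursive formulas. It suffices to remark that the resulting system for the $\mu_j$ is triangular with diagonal coefficient a nonzero multiple of $\beta_m$, hence solvable.
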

\begin{proof}

From (1) we easily obtain
$$\begin{array}{ll}
[y_i,y_j]=(-1)^{j-1}\beta_{i+j-1,2}x_2,&2\leq i+j\leq m+1,\\{}
[y_i,y_j]=0,&m+2\leq i+j\leq 2m.
\end{array}\eqno(2)$$

It should be noted that $\beta_{m,2}\neq 0.$ Indeed, if
$\beta_{m,2}=0,$ then $\ll^{m-1}=\{x_2, y_{m-1}, y_m\},$
$\ll^m=\{x_2, y_m\}$ and $\ll^{m+1}=\{y_m\}$ which imply that
$[y_{m-1},y_1]=ax_2$ and $[x_2,y_1]=by_m,$ where $ab\neq 0.$

The chain of the equalities
$$aby_m=[ax_2,y_1]=[[y_{m-1},y_1],y_1]=\frac{1}{2}[y_{m-1},[y_1,y_1]]=0
$$ implies a contradiction to the property $ab\neq 0.$ Therefore,
$\beta_{m,2}\neq 0.$

The simple analysis of the products leads to $x_2\in
\mathcal{Z(\ll)}$ (since $x_2\in\ll^{m+1} \subseteq
\mathcal{Z(\ll)}$).

Using the Leibniz superidentity we have

$$[x_1,y_i]=\alpha_{1,2}y_{i+1}+\cdots+\alpha_{1,m-i+1}y_m,\quad 1\leq i\leq m-1.$$

The expression $[y_1,x_1]+[x_1,y_1]$ lies in $\mathcal{R}(\ll).$
Hence
$$(1+\alpha_{1,2})y_2+\alpha_{1,3}y_3+\cdots+\alpha_{1,m}y_m \eqno(3)$$
belongs to $\mathcal{R}(\ll),$ as well.

If either $\alpha_{1,2}\neq -1$ or there exists $i \ (3 \leq i
\leq m)$ such that $\alpha_{1,i}\neq 0,$ then multiplying the
linear combination (3) from the right side required times to $x_1$
we deduce $y_m\in \mathcal{R}(\ll).$ However, by (2) we have
$[y_1,y_m]=(-1)^{m-1}\beta_{m,2}x_2$ which implies that $
\beta_{m,2}=0$ and we get a contradiction with condition
$\beta_{m,2}\neq 0.$

If $\alpha_{1,2}=-1$ and $\alpha_{1,i}=0\ (3\leq i\leq m)$, then
by applying the Leibniz superidentity for the basic elements
$\{x_1, y_i, y_i\}$ we obtain $\beta_{2i,2}=0$ for $1\leq i\leq
[\frac{m}{2}].$

Note that in case $m$ is even we obtain $\beta_{m,2}=0$ which is a
contradiction. Therefore, $m$ is odd.

Let us introduce new notations

$$\gamma_s=\beta_{2s-1,2}, \ 1\leq s \leq\frac{m+1}{2}.$$

Then we obtain the family
$L(\gamma_{1},\gamma_{2},\dots,\gamma_{\frac{m+1}{2}}):$
$$\begin{array}{l} \left\{
\begin{array}{ll}
[x_1,x_1]=x_2,&\\{} [y_i,x_1]=y_{i+1},&1\leq i\leq m-1,\\{}
[x_1,y_i]=-y_{i+1},&1\leq i\leq m-1,\\{}
[y_i,y_j]=(-1)^{j-1}\gamma_{\frac{i+j}{2},2}x_2,&i+j\   {\rm{ is \
even}}, \ 2\leq i+j\leq m+1,\ m \ \rm{ is \ odd}.
\end{array}\right.
\end{array}$$

Make the following general transformation of the generator basic
elements:
$$x'_1=b_1x_1, \ \ \ \ y'_1=\displaystyle\sum_{s=1}^{\frac{m+1}{2}}a_{2s-1}
y_{2s-1}.$$ Then $x'_2=b_1^2x_2$ and
$$y'_{2i-1}=b_1^{2(i-1)}\displaystyle\sum_{s=1}^{\frac{m-2(i-1)+1}{2}}a_{2s-1}
y_{2s+2i-3}, 1\leq i\leq \frac{m+1}{2},$$
$$
y'_{2i}=b_1^{2i-1}\displaystyle\sum_{s=1}^{\frac{m-2(i-1)-1}{2}}a_{2s-1}
y_{2s+2i-2}, 1\leq i\leq \frac{m-1}{2}. $$

Choosing the parameters $a_i$ as follows
$$a_1=\sqrt{\displaystyle\frac{1}{b_1^{m-3}\gamma_{\frac{m+1}{2}}}}, \
\ \  a_3=-\displaystyle\frac{a_1\gamma_{\frac{m-1}{2}}}
{2\gamma_{\frac{m+1}{2}}},$$
$$
a_i=-\displaystyle\frac{a_1^2\gamma_{\frac{m-i+2}{2}}+2a_1a_3\gamma_{\frac{m-i+4}{2}}+\cdots
+(2a_1a_{i-2}+\cdots+2a_{\frac{i-3}{2}}a_{\frac{i+1}{2}})
\gamma_{\frac{m-1}{2}}}{2a_1\gamma_{\frac{m+1}{2}}}-$$
$$-\displaystyle\frac{(2a_3a_{i-2}+\dots+2a_{\frac{i-3}{2}}a_{\frac{i+5}{2}}+a^2_{\frac{i+1}{2}})
\gamma_{\frac{m+1}{2}}}{2a_1\gamma_{\frac{m+1}{2}}}, \quad
{\rm{for}} \  \frac{i+1}{2} \ {\rm{odd}}.$$
$$
a_i=-\displaystyle\frac{a_1^2\gamma_{\frac{m-i+2}{2}}+2a_1a_3\gamma_{\frac{m-i+4}{2}}+\cdots
+(2a_1a_{i-2}+\cdots+2a_{\frac{i-5}{2}}a_{\frac{i+3}{2}}+a^2_{\frac{i-1}{2}})
\gamma_{\frac{m-1}{2}}}{2a_1\gamma_{\frac{m+1}{2}}}-$$
$$-\displaystyle\frac{(2a_3a_{i-2}+\dots+2a_{\frac{i-1}{2}}a_{\frac{i+3}{2}})
\gamma_{\frac{m+1}{2}}}{2a_1\gamma_{\frac{m+1}{2}}} , \quad
{\rm{for}} \ \frac{i+1}{2} \ {\rm{even}}.
$$ when $4\leq i \leq m,$
we obtain $[y'_m,y'_1]=x'_2,$ $[y'_i,y'_1]=0$ for $1\leq i\leq
m-1.$

Then applying Leibniz superidentity get the rest brackets
$$\begin{array}{ll}
[y'_i,y'_j]=0,&1\leq i,j\leq m,\ i+j\neq m+1, \\{}
[y'_{i},y'_j]=(-1)^{j-1}x'_2,&1\leq i,j\leq m,\ i+j=m+1.
\end{array}.$$ Thus, we obtain the superalgebra of the theorem. \end{proof}

{\bf{Case}} $\mathcal{ZF}^{n,m} \ (n\geq 3,\ m\geq 3).$

\

\begin{lem} Any Leibniz superalgebra from $\mathcal{ZF}^{n,m} \ (n\geq 3,\ m\geq 3)$ has
nilindex less that $n+m.$
\end{lem}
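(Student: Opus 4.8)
The plan is to fix an arbitrary $\ll\in\mathcal{ZF}^{n,m}$ with $n\ge 3$, $m\ge 3$, take the adapted basis $\{x_1,\dots,x_n,y_1,\dots,y_m\}$ guaranteed by Theorem~\ref{t29}, and assume for contradiction that the nilindex equals $n+m$; then $\ll$ has exactly two generators, namely $x_1$ and $y_1$. First I would use the Leibniz superidentity applied to triples $\{x_i,y_1,x_1\}$ and $\{y_i,y_1,x_1\}$ to propagate the structure constants $\alpha_{i,j}$ and $\beta_{i,j}$, obtaining recursive relations that express $[x_i,y_1]$ and $[y_i,y_1]$ in terms of $\alpha_{1,j}$ and $\beta_{1,j}$ (together with Lemma~\ref{family}, which already gives all the brackets $[y_i,y_j]$ in terms of the $\beta$'s). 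The point of this bookkeeping is that the whole multiplication table is controlled by a very short list of parameters coming off the generators.

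Next I would track the descending central sequence $\ll^k$ explicitly. Since $x_1,y_1$ are the only generators and $[x_i,x_k]=[y_j,x_k]=0$ for $k\ge 2$, each step down the lower central series can only lengthen words by multiplying on the right by $x_1$, $y_1$, or by a bracket $[y_i,y_j]\in\ll_0$. I would argue that for $\ll$ to have nilindex exactly $n+m$ we would need $\dim(\ll^k/\ll^{k+1})=1$ for all $k\ge 2$ (the single-generated-like behaviour), and in particular the even part must be the zero-filiform Leibniz algebra $[x_i,x_1]=x_{i+1}$ with $x_n$ spanning $\ll^{?}\cap\ll_0$ at the appropriate depth. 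The contradiction should come from comparing two ways of reaching the ``bottom'' of the algebra: one path through the odd part (reaching $y_m$) and one through the even part (reaching $x_n$). With $n\ge 3$ there is genuine room in $\ll_0$ beyond $x_2$, and the brackets $[y_i,y_j]$ land in $\mathrm{span}\{x_2,\dots,x_n\}$; feeding these back through the Leibniz identity forces relations like $\beta_{m,2}=0$ (exactly as in the proof of Theorem~\ref{thm32}, where the case $n=2$ barely survives only because $m$ is odd), and now the extra even dimensions make the analogous forcing unavoidable for \emph{every} parity, killing the top of the lower central series prematurely.

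Concretely, the key steps in order are: (i) reduce to two generators and write down the $\alpha,\beta$ recursions from the Leibniz superidentity; (ii) show $x_2\in\mathcal{Z}(\ll)$ and, more generally, locate $\ll^{n+m-1}$, $\ll^{n+m-2}$ inside $\mathrm{span}\{x_n\}$ or $\mathrm{span}\{y_m\}$ and derive that the element $[y_1,x_1]+[x_1,y_1]\in\mathcal{R}(\ll)$ forces $\alpha_{1,2}=-1$ and $\alpha_{1,j}=0$ for $j\ge 3$; (iii) apply the Leibniz superidentity to $\{x_1,y_i,y_i\}$ to get vanishing of half of the $\beta_{\bullet,2}$, and then to $\{x_1,y_i,y_j\}$ and $\{y_i,y_j,x_1\}$ to get vanishing of the $\beta_{\bullet,t}$ for $t\ge 3$ as well, exploiting that $n\ge 3$ gives new constraints absent in the $n=2$ analysis; (iv) conclude $[y_i,y_j]=0$ for all $i,j$ with $i+j$ large, hence $\ll^{n+m}$ or even $\ll^{n+m-1}$ already vanishes, contradicting nilindex $n+m$. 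The main obstacle is step (iii): organizing the system of equations on the $\beta_{i,t}$ coming from the superidentity so that one can read off that they all vanish (or are too constrained to support a length-$(n+m)$ lower central series) without an unmanageable case analysis over the shapes of Jordan blocks; handling the interaction between the even-part filtration $x_1\mapsto x_2\mapsto\cdots\mapsto x_n$ and the odd-part filtration simultaneously is where the argument has real content.
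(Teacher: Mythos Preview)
Your plan has the right scaffolding but contains a concrete error and misses the organizing idea that makes the paper's argument work. First, step~(ii) asserts $x_2\in\mathcal{Z}(\ll)$; this is simply false for $n\ge 3$, since $[x_2,x_1]=x_3\neq 0$. That claim held in Theorem~\ref{thm32} only because $n=2$, so there was no $x_3$. Second, you assert that $[y_1,x_1]+[x_1,y_1]\in\mathcal{R}(\ll)$ \emph{forces} $\alpha_{1,2}=-1$ and $\alpha_{1,j}=0$; it does not. What that element gives you (after right-multiplying by $x_1$) is only that if it is nonzero then $y_m\in\mathcal{R}(\ll)$, and one must then separately exploit $y_m\in\mathcal{R}(\ll)$ via Lemma~\ref{family} to reach $\beta_{m,2}=0$. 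But for that to be a contradiction you must already know $\beta_{m,2}\neq 0$, and this is \emph{not} automatic: it depends on where $x_2$ sits in the descending central series.

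The missing idea is precisely this: let $s$ be the integer with $x_2\in\ll^s\setminus\ll^{s+1}$. The paper first rules out $x_2\notin\ll^3$ (by showing that would force $y_2\in\ll^3$, hence $\ll^2=\mathcal{R}(\ll)$, and then (4) blocks $y_2$ from lying in $\ll^3$ after all). With $x_2\in\ll^3$ one has $3\le s\le m+1$, and the two regimes $s\le m$ and $s=m+1$ require genuinely different contradictions. For $s\le m$ the key computation is $[y_{s-1},[y_1,y_1]]=2\beta_{s-1,2}[x_2,y_1]+(\text{higher})=0$, which kills $\beta_{s-1,2}\alpha_{2,s}$ and contradicts the shape of $\ll^{s+1},\ll^{s+2}$. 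Only in the case $s=m+1$ does one get the filtration $\ll^i=\{x_2,\dots,x_n,y_i,\dots,y_m\}$ for $i\le m$ and $\beta_{m,2}\neq 0$, and \emph{then} the dichotomy on $[y_1,x_1]+[x_1,y_1]$ finishes the job (the sub-case where it vanishes uses $[x_1,[y_m,y_1]]$ and needs $n\ge 3$ so that $x_3$ exists). Your steps (ii)--(iii) try to run the $s=m+1$ endgame globally, without first pinning down $s$; that is why the forcing you expect does not materialize.
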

\begin{proof} Let us assume the contrary, i.e. $\ll$ is a Leibniz superalgebra
from $\mathcal{ZF}^{n,m} \ (n\geq 3,\ m\geq 3)$ and $\ll$ has the
nilindex equal to $n+m$. Then in the adapted basis we have
$$\ll=\{x_1,x_2,\dots,x_n,y_1,y_2,\dots,y_m\},$$
$$\ll^2=\{x_2,\dots,x_n,y_2,\dots,y_m\},$$
$$\ll^3\supset \{x_3,\dots,x_n,y_3,\dots,y_m\}.$$

Let us suppose that $\ll^3=\{x_3,\dots,x_n,y_2,\dots,y_m\},$ i.e.
$x_2\notin  \ll^3$ and $y_2\in \ll^3$. Then there exits $i_0$ ($2
\leq i_0 \leq n$) such that $[x_{i_0},y_1]=\alpha_{i_{0},2}
y_2+\cdots+\alpha_{i_{0},m} y_m$ with $\alpha_{i_{0},2}\neq 0.$
Since $x_i\in \mathcal{R}(\ll)$ for $2\leq i \leq n$ and
$\mathcal{R}(\ll)$ is an ideal, then  $\alpha_{i_{0},2}
y_2+\cdots+\alpha_{i-{0},m} y_m\in \mathcal{R}(\ll).$

Multiplying the product $[x_{i_0},y_1]$ on the right side
consequently to the basic element $x_1$ $(m-1)-$times we easily
obtain that $y_2, y_3,\dots, y_{m}\in \mathcal{R}(\ll),$ that is
$\ll^2=\mathcal{R}(\ll).$

By induction one can prove the following
$$[x_i,y_1]=\left\{\begin{array}{cc}
\sum\limits_{j=2}^{m+1-i}\alpha_{1,j}y_{j+i-1}, &  {\rm{if}} \quad
i+1 \leq m,\\[1mm]
0, & {\rm{if}} \quad i+1 > m.\\[1mm]
\end{array} \right.\eqno(4)$$
Since $\ll^2=\mathcal{R}(\ll)$ then $y_2$ can appear only in the
products $[x_i,y_1]$ for $2\leq i \leq n)$ or $[y_j,x_1]$ for
$2\leq j\leq m-1).$ However, in the first case from (4) we
conclude that $y_2$ does not lie in $\ll^3$ and in the second case
the element $y_2$ can not be obtained, i.e. in both cases we have
a contradiction with the assumption
$\ll^3=\{x_3,\dots,x_n,y_2,\dots,y_m\}.$

Thus, $\ll^3=\{x_2, \dots, x_n, y_3,\dots, y_m\}.$ Let $s$ be a
natural number such that $x_2\in \ll^s \setminus \ll^{s+1}.$

Suppose $s\leq m.$ Then we have
$$\ll^i=\{x_2,\dots,x_n,y_i,\dots,y_m\}, \quad 2 \leq i \leq s,$$
$$\ll^{s+1}=\{x_3,\dots,x_n,y_s,\dots,y_m\}$$
and in the equality
$[y_{s-1},y_1]=\sum\limits_{j=2}^n\beta_{s-1,j}x_j$ the
coefficient $\beta_{s-1,2}$ is not zero.

From Lemma \ref{family} we have
$$[y_1,y_s]=\displaystyle\sum_{i=0}^{s-1}(-1)^iC_{s-1}^i\sum_{t=2}^{n-s+i+1}
\beta_{1+i,t}x_{t+s-i-1},$$ in which the coefficient
$\beta_{s-1,2}$ occurs. Taking into account the equality
$[y_s,y_1]=\displaystyle\sum_{j=2}^{n}\beta_{s,j}x_j$ we conclude
that  $x_3\in lin<[y_1,y_s],[y_s,y_1],x_4,\dots,x_n>$. Therefore
$\ll^{s+2}=<x_3,\dots,x_n,y_{s+1},\dots,y_m>,$ i.e. $y_s\in
\ll^{s+1}\setminus \ll^{s+2}$ and $\alpha_{2,s}\neq 0.$

Consider the equalities
$$[y_{s-1},[y_1,y_1]]=2[[y_{s-1},y_1],y_1]=2\left[\sum_{t=2}^{n}\beta_{s-1,t}[x_t,y_1]\right]
=2\beta_{s-1,2}[x_2,y_1]+\sum_{i \geq s+1}(*)y_i.$$ On the other
hand $[y_{s-1},[y_1,y_1]]=0,$ because $[y_1,y_1]\in
\mathcal{R}(\ll).$

The basic element $y_s$ appears only in the product $[x_2,y_1].$
Hence we have that $\beta_{s-1,2}\alpha_{2,s}y_s+\sum\limits_{i
\geq s+1}(*)y_i=0$ which implies $\beta_{s-1,2}\alpha_{2,s}= 0.$
This contradicts to the assumption $s \leq m.$

Let us consider now the case $s=m+1.$ Then we have
$$\ll=\{x_1,x_2,\dots,x_n,y_1,y_2,\dots,y_m\},$$
$$\ll^i=\{x_2,x_3, \dots,x_n,y_i, y_{i+1}\dots,y_m\}, \quad 2 \leq i \leq m,$$
$$\ll^{m+i-1}=\{x_i,x_{i+1}\dots,x_n\}, \quad 2 \leq i \leq n.$$

Since $x_2\in\ll^{m+1}$ we have
$[y_m,y_1]=\sum\limits_{i=2}^n\beta_{m,i}x_i$ with $\beta_{m,2
}\neq 0.$

The sum $[y_1,x_1]+[x_1,y_1]$ lies in $\mathcal{R}(\ll)$ since
$[y_1,x_1]+[x_1,y_1]=(1+\alpha_{1,2})y_2+\alpha_{1,3}y_3+\cdots+\alpha_{1,m}y_m\in
\mathcal{R}(\ll)$.

If $[y_1,x_1]+[x_1,y_1]=0,$ then using the Leibniz superidentity
we have $$[x_1,[y_m,y_1]]=
[[x_1,y_m],y_1]+[[x_1,y_1],y_m]=-[y_2,y_m]=\beta_{m,2}x_3+\sum_{i\geq
4} (*)x_i.$$

On the other hand
$$[x_1,[y_m,y_1]]=\sum\limits_{i=2}^n\beta_{m,i}[x_1, x_i]=0.$$
Hence, $\beta_{m,2}=0$ which is a contradiction.

Thus, $[y_1,x_1]+[x_1,y_1]\neq 0.$ Continuing the same
argumentation as in the proof of Theorem \ref{thm32} we obtain
$y_m\in \mathcal{R}(\ll)$. Therefore
$$[y_1,y_m]=\displaystyle\sum_{i=0}^{m-1}(-1)^i C_{m-1}^i
\sum_{t=2}^{n-m+i+1}\beta_{1+i,t}x_{t+m-i-1}=0.$$ The minimal
value of the expression $t+m-i-1$ is reached when $i=m-1$ and
$t=2.$ Thus,
$[y_1,y_m]=(-1)^{m-1}C_{m-1}^{m-1}\beta_{m,2}x_2+\sum\limits_{i\geq
4}(*)x_i$ which implies that $\beta_{m,2}=0.$ That is a
contradiction with the assumption that nilindex of $\ll$ is equal
to $n+m.$
\end{proof}

\section{Leibniz superalgebras with the characteristic sequence $(n|m_1,m_2,\dots,m_k)$
and nilindex $n+m$}

Leibniz superalgebras with the characteristic sequence equal to
$(n|m-1,1)$ and with the nilindex $n+m$ were examined in
 \cite{C-G-N-O}. Therefore, in this section we shall consider the
Leibniz superalgebras $\ll$ of nilindex $n+m$ with the
characteristic sequence equal to $(n|m_1,m_2,\dots,m_k)$ with
conditions $m_1 \leq m-2$.

From the definition of characteristic sequence there exits a basis
$\{x_1, x_2, \dots x_n, y_1,\\ y_2, \dots y_m\}$ in which  the
operator $R_{x_1|_{L_1}}$ has the following form:

$$R_{{x}|_{\ll_1}}=\left(\begin{array}{llll}
J_{m_{j_1}}&0&\cdots&0\\
0&J_{m_{j_2}}&\cdots&0\\
\cdots&\cdots&\cdots&\cdots\\
0&0&\cdots&J_{m_{j_k}} \end{array}\right),$$ where $(m_{j_1},
m_{j_2},\dots, m_{j_k})$ is a permutation of $(m_1,
m_2,\dots,m_k).$ Without loss of generality, by a shifting of the
basic elements we can assume that operator $R_{x_1|_{L_1}}$ has
the following form
$$R_{x_1|_{\ll_1}}=\left(\begin{array}{llll}
J_{m_1}&0&\cdots&0\\
0&J_{m_2}&\cdots&0\\
\cdots&\cdots&\cdots&\cdots\\
0&0&\cdots&J_{m_k} \end{array}\right).$$ It means that the basis
$\{x_1, x_2, \dots x_n, y_1, y_2, \dots y_m\}$ satisfies the
following conditions:
$$\begin{array}{ll}
[x_i,x_1]=x_{i+1}, &  1 \leq i \leq n-1, \\{}
[y_j,x_1]=y_{j+1},&\mbox{for }j\notin
\{m_1,m_1+m_2,\dots,m_1+m_2+\cdots+m_k\},\\{}
[y_j,x_1]=0,&\mbox{for
}j\in\{m_1,m_1+m_2,\dots,m_1+m_2+\cdots+m_k\}.
\end{array}\eqno(5)$$

It is clear that two generators can not lie in $\ll_0.$ In fact,
in \cite{Omirov2} the table of multiplication of the Leibniz
algebra $\ll_0$ is presented and it has only one generator.

\begin{thm}\label{t2}
Let $\ll$ be a Leibniz superalgebra of nilindex $n+m$ with
characteristic sequence $(n|m_1,m_2,\dots,m_k),$ where $m_1\leq
m-2.$ Then both generators can not belong to $\ll_1$ at the same
time.
\end{thm}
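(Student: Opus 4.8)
The goal is to show that under the hypotheses (nilindex $n+m$, characteristic sequence $(n|m_1,\dots,m_k)$ with $m_1\le m-2$), the superalgebra $\ll$ cannot be generated by two elements both lying in $\ll_1$. I would argue by contradiction: assume $\ll$ is two-generated with both generators in $\ll_1$, call them $y_1$ and $\widetilde y$. The first step is to extract the structural consequences of this assumption for the descending central sequence $\ll^k$. Since all products land according to the $\mathbb Z_2$-grading and both generators are odd, $\ll^2=[\ll,\ll]$ is spanned by brackets of odd elements, hence $\ll^2\subseteq \ll_0 \oplus (\text{something})$; more precisely one tracks which homogeneous pieces of $\ll$ get "activated" at each stage. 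The key point to exploit is that $[\ll_1,\ll_1]\subseteq\ll_0$, so the even part can only be produced from odd--odd brackets, while the odd part $\ll_1$ beyond the generators must come from brackets involving at least one even element — and the even elements themselves are not generators. I would set up the bookkeeping: let $d_0^{(k)}=\dim(\ll^k\cap\ll_0)$ and $d_1^{(k)}=\dim(\ll^k\cap\ll_1)$ and derive that consecutive steps can each drop the total dimension by at most... and reach a contradiction with nilindex being exactly $n+m$ (which forces the dimension to drop by exactly one at each step, i.e.\ $\ll$ is "as slowly nilpotent as possible").

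Concretely, I expect the argument to run as follows. Because the nilindex equals $n+m=\dim\ll$, the chain $\ll=\ll^1\supsetneq\ll^2\supsetneq\cdots\supsetneq\ll^{n+m}\supsetneq\ll^{n+m+1}=0$ must have $\dim\ll^k=n+m-k+1$ for every $k$; in particular $\dim\ll^2 = n+m-1$, so exactly one generator survives modulo $\ll^3$... no — rather $\ll/\ll^2$ is two-dimensional (the two generators) so $\dim\ll^2=n+m-2$, and from $\ll^2$ on, each step drops dimension by exactly one. Now I use the grading: since both generators are odd, $\ll/\ll^2$ sits entirely in the odd part, so $\ll_0\subseteq\ll^2$, i.e.\ $d_0^{(2)}=n$. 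Then among the $n-2+m$... one tracks how $x$'s and $y$'s enter. The even part $\ll_0$ has dimension $n$ and is entirely inside $\ll^2$; by the remark in the preliminaries, elements $[a,b]+(-1)^{\alpha\beta}[b,a]$ lie in $\mathcal R(\ll)$, and for odd $a,b$ this gives symmetric combinations of odd--odd products in the right annihilator. I would use Theorem \ref{t1}'s characterization (maximal nilindex $\iff$ single-generated) as a sanity anchor but the real work is the grading count.

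The main obstacle — and where I would spend the effort — is ruling out the delicate borderline configurations: one must show that if both generators are odd, then to produce all of $\ll_0$ (dimension $n\ge 1$; and note $m_1\le m-2$ keeps us away from the zero-filiform and $(n|m-1,1)$ cases already handled) inside the slowly-descending chain, the operator $R_{x_1}$ or rather $R$ by odd generators cannot simultaneously produce a long enough chain in $\ll_1$ to account for the characteristic sequence $(n|m_1,\dots,m_k)$. The tension is between: (i) $\ll_1$ must have a chain of $y$'s compatible with the Jordan block structure of $R_{x_1|_{\ll_1}}$ displayed in $(5)$, which requires an even element $x_1$ acting with a block of size $m_1\le m-2<m$; (ii) but $x_1$ (indeed all of $\ll_0$) is itself non-generating and hence produced only via odd--odd brackets, which via Lemma \ref{family} are governed by the $\beta$-coefficients; (iii) combining, one shows some $\beta$-coefficient that "should" be nonzero (to keep the chain descending by one each step) is forced to vanish, by a computation of the same flavor as in Theorem \ref{thm32} and the preceding lemma (apply the Leibniz superidentity to a triple $\{y_i,y_1,y_1\}$, use $[y_1,y_1]\in\mathcal R(\ll)$, get a product of a surviving $\beta$ with a surviving $\alpha$ equal to zero). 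That contradiction closes the proof. I would organize it as: (a) dimension/grading bookkeeping forcing $\ll_0\subseteq\ll^2$ and a rigid filtration; (b) translate $(5)$ and the condition $m_1\le m-2$ into the statement that the odd chain "splits" before exhausting $\ll_1$; (c) run the superidentity computation to kill the critical structure constant; (d) conclude.
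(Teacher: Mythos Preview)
Your outline captures the right \emph{shape} (contradiction, grading bookkeeping, a superidentity computation to kill a structure constant), but it misses the key structural step that actually drives the paper's proof, and without it your plan does not close.

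The paper's argument is not a pure dimension/parity count. After reducing the two odd generators to $y_1$ and $y_{m_1+1}$, it splits on whether $[y_1,y_1]$ (or $[y_{m_1+1},y_{m_1+1}]$) lies in $\ll_0\setminus\ll_0^2$. In the main case it does, so one may take $[y_1,y_1]=x_1$ and look at the \emph{single-generated} subalgebra $\langle y_1\rangle$. By Theorem~\ref{t1} this subalgebra is one of the two maximal-nilindex models, which forces $m_1\in\{n,n+1\}$ and pins down the entire multiplication on $\{x_1,\dots,x_n,y_1,\dots,y_{m_1}\}$. With this rigidity in hand, an explicit parity-alternation argument on $\ll^{2k}/\ll^{2k+1}$ (even quotients are in $\ll_0$, odd ones in $\ll_1$, precisely because both generators are odd) shows the nilindex can be at most $2n+2$, hence $m=n+2$, and then a single superidentity computation ($[x_n,y_{n+1}]=[[x_{n-1},x_1],y_{n+1}]=0$) kills the last needed product and contradicts $\dim\ll^{2n}=2$. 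The remaining cases reduce to this one or give $x_1\in\mathcal R(\ll)$ directly.

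Your plan never invokes Theorem~\ref{t1} on $\langle y_1\rangle$, so you never obtain the constraint $m_1\in\{n,n+1\}$ or the explicit table of products inside that subalgebra. Without that, your step~(c) (``run the superidentity computation to kill the critical structure constant'') has no identified target: you have not said which $\beta_{i,j}$ must be nonzero from the nilindex hypothesis, nor which triple in the superidentity annihilates it. The grading bookkeeping in your step~(a) is correct and is indeed used in the paper, but by itself it only gives the alternation of parities in successive quotients; it does not produce a contradiction until combined with the rigid multiplication coming from the single-generated classification. In short: insert the $\langle y_1\rangle$ argument between your steps~(a) and~(b), and then your~(c) becomes the concrete computation $[x_n,y_{n+1}]=0$ rather than an unspecified one.
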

\begin{proof} Let Leibniz superalgebra $\ll=\ll_0\oplus  \ll_1$ has nilindex $n+m$ and
let $\{x_1, x_2, \dots, x_n\}$ be a basis of $\ll_0$ and $\{y_1,
y_2, \dots, y_m\}$ a basis of $\ll_1$. Suppose that two generators
lie in $\ll_1.$ Then they should be from the set
$$\{y_1, y_{m_1+1}, y_{m_1+m_2+1}, \dots, y_{m_1+m_2+\cdots+m_{k-2}+1},
y_{m_1+m_2+\cdots+m_{k-1}+1}\}$$

Without loss of generality, the generators can be chosen as
$\{y_1, y_{m_1+1}\}.$

Consider the following cases:

\noindent {\bf Case 1.} Let $[y_1,y_1]\in \ll_0\setminus \ll_0^2.$
Then consider the Leibniz superalgebra generated by the element
$<y_1>.$ Since $[y_1,y_1]\in \ll_0\setminus\ll_0^2$ we can assume
$[y_1,y_1]=x_1.$ Then from the products in (5) we deduce $\{x_1,
x_2, \dots, x_n, y_2, y_3, \dots, y_{m_1}\}\subseteq <y_1>.$ It it
easy to see that $y_{m_1+1}\notin <y_1>.$ Indeed, if $y_{m_1+1}\in
<y_1>,$ then $\{y_{m_1+2},\dots,y_{m_1+m_2}\} \subseteq <y_1>$
which implies $C(\ll)\geq (n \ | \ m_1+m_2, m_3, \dots, m_k).$
That is a contradiction to the condition of characteristic
sequence of $\ll$, because $C(\ll)= (n|m_1, m_2, \dots, m_k).$

Thus, the Leibniz superalgebra generated by the basic element
$y_1$ consist of
$$\{x_1,x_2,\dots,x_n,y_1,y_2,\dots,y_{m_1}\}.$$
Since the superalgebra $<y_1>$ is single-generated then from
Theorem \ref{t1} we have that either $m_1=n$ or $m_1=n+1$ and the
multiplication in $<y_1>$ has the following form:
$$\begin{array}{ll}
[x_i,x_1]=x_{i+1},&1\leq i\leq n-1,\\{} [y_j,x_1]=y_{j+1},&1\leq
j\leq m_1-1,\\{} [x_i,y_1]=\frac{1}{2}y_{i+1},&1\leq i\leq
m_1-1,\\{} [y_j,y_1]=x_j,&1\leq j\leq n.
\end{array}$$

\

{\bf Case $m_1=n$}. Since $y_1$ and $y_{n+1}$ are generators we
have
$$\ll=\{x_1,x_2,\dots,x_n,y_1,\dots,y_n,y_{n+1},\dots,y_m\},$$
$$\ll^2=\{x_1,x_2,\dots,x_n,y_2,\dots,y_n,y_{n+2},\dots,y_m\}.$$
Besides, $x_1 \notin \ll^3.$ Otherwise, if $x_1 \in \ll^3,$ then
there exists $z\in \ll_1$ such that $z\in \ll^2/\ll^3.$ Thereby
$z\in lin <
[y_1,y_1],[y_1,y_{n+1}],[y_{n+1},y_1],[y_{n+1},y_{n+1}]>$ and
taking into account that $[y_i,y_j]\in \ll_0$ we obtain $z \in
\ll_0$ which is a contradiction. Thus,
$$\ll^3=\{x_2,\dots,x_n,y_2,\dots,y_n,y_{n+2},\dots,y_m\}.$$

If $\ll^{2k}=\{x_i,x_{i+1},\dots,x_n,y_j,$
$\dots,y_n,y_{n+2},\dots,y_m\}$, then by a similar way one can
prove that
$\ll^{2k+1}=\{x_{i+1},\dots,x_n,y_j,\dots,y_n,y_{n+2},\dots,y_m\}$.
In fact, if $z\in \ll^{2k}/\ll^{2k+1},$ then $z$ has to be
generated by $2k$ products of the generators (but they are from
$\ll_1$). Hence this products belong to $\ll_0,$ and we have $z\in
\ll_0.$

Applying the similar argumentation we get
$$\ll^{2k+2}=\{x_{i+1},\dots,x_n,y_{j+1},\dots,y_n,y_{n+2},\dots,y_m\}.$$

Continuing with the process, we obtain that
$\ll^{2n+1}=\{y_{i_1},y_{i_2},\dots,y_{i_k}\}$ and $\ll^{2n+2}=0.$
Since $dim(\ll^{2n+1}/\ll^{2n+2})=1$ then $\ll^{2n+1}=\{y_{n+2}\}$
and nilindex should be equal to $2n+2.$ Thus, $m=n+2$ and we have
$$\ll=\{x_1,x_2,\dots,x_n,y_1,\dots,y_n,y_{n+1},y_{n+2}\},$$
$$\ll^{2k}=\{x_k,\dots,x_n,y_{k+1},\dots,y_n,y_{n+2}\},\ \ 1\leq k\leq n-1,$$
$$\ll^{2k+1}=\{x_{k+1},\dots,x_n,y_{k+1},\dots,y_n,y_{n+2}\},\ \ 1\leq k\leq n-1,$$
$$\ll^{2n}=\{x_n,y_{n+2}\}, \ \ \ll^{2n+1}=\{y_{n+2}\}, \ \ \ll^{2n+2}=\{0\}.$$

Furthermore, $\ll^{2n}=[\ll^{2n-1},\ll]=<[x_n,y_1],$
$[x_n,y_{n+1}],$ $[y_n,y_1],$ $[y_n,y_{n+1}],$ $[y_{n+2},y_1],$
$[y_{n+2},y_{n+1}]>.$ Note that the element $y_{n+2}$ can be
obtained only from product $[x_n,y_{n+1}]$ (because $[x_n,y_1]=0$,
otherwise we get a contradiction with the property of
characteristic sequence). However,
$$[x_n,y_{n+1}]=[[x_{n-1},x_1],y_{n+1}]=[x_{n-1},[x_1,y_{n+1}]]+[[x_{n-1},y_{n+1}],x_1]=0,$$
which deduce $\ll^{2n}=\{x_n\},$ that is a contradiction with the
condition of nilindex.

\

{\bf Case $m_1=n+1$}. In this case, similar to the previous case,
we get a contradiction.

\noindent {\bf Case 2.} Let $[y_1,y_1]\notin \ll_0\setminus
\ll_0^2$ and $[y_{m_1+1},y_{m_1+1}]\in\ll_0 \setminus \ll_0^2.$
Then applying the same arguments for $y_{m_1+1}$ as for $y_1$ in
Case 1, we obtain a contradiction with the fact that both
generators lie in $\ll_1,$ as well.

\

\noindent {\bf Case 3.} Let $[y_1,y_1]\notin \ll_0 \setminus
\ll_0^2$ and $[y_{m_1+1},y_{m_1+1}]\notin \ll_0 \setminus
\ll_0^2.$ Then, without loss of generality, we can assume that
$$\begin{array}{l}
[y_1,y_{m_1+1}]=x_1,\\{}
[y_{m_1+1},y_1]=\sum\limits_{i=1}^{n}b_ix_i.
\end{array}$$

If $b_1=1,$ then making the change of basis $y'_1=y_1+y_{m_1+1}$
we obtain $[y'_1,y'_1]\in \ll_0 \setminus \ll_0^2.$ Therefore this
case can be reduced to Case 1.

If $b_1\neq 1,$ then
$[y_1,y_{m_1+1}]-[y_{m_1+1},y_1]=(1-b_1)x_1+b_2x_2+\cdots+b_nx_n\in
\mathcal{R}(\ll)$ and since $x_i\in \mathcal{R}(\ll)$ ($2\leq i
\leq n$) we get $x_1\in \mathcal{R}(\ll)$. From the Leibniz
superidentity we obtain $\ll^3=\{0\}$ and therefore $n=1, \ m=2.$
Obviously, we get a Leibniz algebra, i.e. the Leibniz superalgebra
with condition $m=0.$
\end{proof}

In other words, Theorem \ref{t2} claims that one generator lies in
$\ll_0$ and another one belongs to $\ll_1.$ Evidently, $x_1$ is a
generator and as a generator in $\ll_1$ we can chose $y_1.$

Put $$[x_i,y_1]= \sum\limits_{t=2}^m \alpha_{i,t}y_t,\quad 1 \le i
\le n,$$
$$[y_j,y_1]=
\sum\limits_{s=2}^n \beta_{j,s}x_s, \quad 1 \le j \le m.$$

The following equality can be proved by induction
$$[y_i,y_j]=\displaystyle\sum_{s=0}^{min\{i+j-1,m_1\}-i}(-1)^s C_{j-1}^s
\sum_{t=2}^{n-j+s+1}\beta_{i+s,t}x_{t+j-s-1}\eqno(6)$$ where
$1\leq i, j\leq m_1.$

\begin{thm} Let $\ll$ be a Leibniz superalgebra with characteristic sequence equal to $(n|m_1,
m_2, \dots, m_k),$ where $m_1 \leq m-2$. Then nilindex of $\ll$ is
less than $n+m.$
\end{thm}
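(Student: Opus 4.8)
The plan is to argue by contradiction. First, $\ll$ cannot be single-generated: by Theorem~\ref{t1} a single-generated superalgebra of $Leib^{n,m}$ with $m\ne 0$ has characteristic sequence $(n|m)$, which is incompatible with $m_1\le m-2$. Hence, if the conclusion failed, the nilindex would be exactly $n+m$ and $\ll$ would be two-generated. By Theorem~\ref{t2} (and the remark following it) the generators are then $x_1\in\ll_0$ and $y_1\in\ll_1$, the products with $x_1$ are given by (5), and the products $[x_i,y_1]$, $[y_j,y_1]$, $[y_i,y_j]$ are governed by the coefficients $\alpha_{i,t},\beta_{j,s}$ together with formula (6) (valid for $i,j\le m_1$). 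I would also record the standard facts already used in the paper: $[y_1,y_1]\in\mathcal R(\ll)$, $[x_1,y_1]+[y_1,x_1]\in\mathcal R(\ll)$, $x_i\in\mathcal R(\ll)$ for $2\le i\le n$, and $\mathcal R(\ll)$ is an ideal; moreover, since $\dim\ll=n+m$ equals the nilindex, $\dim(\ll/\ll^2)=2$ and $\dim(\ll^k/\ll^{k+1})=1$ for all $k\ge 2$.

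The first substantive step exploits the hypothesis $m_1\le m-2$: the vectors $y_{m_1+1},y_{m_1+m_2+1},\dots$ that start the Jordan blocks of $R_{x_1}|_{\ll_1}$ beyond the first are not generators, hence each lies in $\ll^2$; but products of two odd elements land in $\ll_0$, and the $x_1$-action only shifts $x_i\mapsto x_{i+1}$, $y_j\mapsto y_{j+1}$ within a block (killing the last element of each block), so such a block-start can enter $\ll^2$ only through a product $[x_i,y_1]=\sum_{t\ge 2}\alpha_{i,t}y_t$. I would then, exactly as in the proof of the preceding lemma, follow the one-dimensional layers $\ll^k/\ll^{k+1}$ step by step, threading the chain $x_1,\dots,x_n$ and the several $y$-chains through them, to pin down the integer $s$ with $x_2\in\ll^s\setminus\ll^{s+1}$, and show that in the resulting description $\beta_{s-1,2}\ne 0$ (respectively $\beta_{m,2}\ne 0$ in the extreme case $s=m+1$), while the relevant coefficient $\alpha_{2,s}$ is nonzero as well.

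The contradiction is then extracted as in the earlier lemma. In the generic range one expands $[y_{s-1},[y_1,y_1]]=2[[y_{s-1},y_1],y_1]$ via (5)--(6): the left side vanishes because $[y_1,y_1]\in\mathcal R(\ll)$, while the right side equals $2\beta_{s-1,2}\alpha_{2,s}\,y_s+\sum_{i>s}(*)y_i$, so $\beta_{s-1,2}\alpha_{2,s}=0$, a contradiction. In the extreme case one first shows $[x_1,y_1]+[y_1,x_1]\ne 0$ (if it were zero, a direct Leibniz-superidentity computation on $[x_1,[y_m,y_1]]$ already forces $\beta_{m,2}=0$), then multiplies this annihilator element on the right by $x_1$ enough times to force $y_m\in\mathcal R(\ll)$, and finally contradicts this through the formula for $[y_1,y_m]$, whose $x_2$-coefficient is a nonzero multiple of $\beta_{m,2}$. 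Some splitting into subcases, according to the position of $s$ relative to the block boundaries $m_1,\,m_1+m_2,\dots$, is to be expected.

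The step I expect to be the main obstacle is the combinatorial bookkeeping of the lower central series in the presence of several Jordan blocks in the odd part: unlike the single-chain situation of the preceding lemma, one must keep simultaneous track of the $x$-chain, of the interleaving of the $y$-chains, and of the extra odd vectors injected by the $[x_i,y_1]$ products, and isolate cleanly the ``first obstruction'' so that the $[y_{s-1},[y_1,y_1]]$ device applies; carrying the various block-boundary subcases and the degenerate possibility $[x_1,y_1]+[y_1,x_1]=0$ through uniformly is the delicate point.
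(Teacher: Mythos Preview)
Your plan to transplant the argument of the preceding lemma breaks down at the step where you assert $\alpha_{2,s}\ne 0$. In the zero-filiform (single-block) situation that claim was justified because, at the relevant layer of the lower central series, the element $y_s$ could only be produced by $[x_2,y_1]$; with several Jordan blocks in the odd part this is no longer forced. In fact the paper's own proof, in the case $x_2\in\ll^3$, applies exactly your identity $[y_{j_0},[y_1,y_1]]=2[[y_{j_0},y_1],y_1]$ (with $j_0$ chosen so that $\beta_{j_0,2}\ne 0$, and after a harmless change of $x_1$ making $[y_{j_0},y_1]=x_2$) and concludes from it that $[x_2,y_1]=0$, i.e.\ $\alpha_{2,t}=0$ for every $t$. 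So the very coefficient you need nonzero is forced to vanish, and the expression $2\beta_{s-1,2}\alpha_{2,s}\,y_s+\sum_{i>s}(*)y_i$ gives no contradiction.

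The paper organizes the argument around a different dichotomy: whether $x_2\in\ll^3$ or $x_2\notin\ll^3$. In the first case, after establishing $[x_2,y_1]=0$, the contradiction is extracted not from $[y_{s-1},[y_1,y_1]]$ but from $0=[x_1,[y_1,y_{s-1}]]=[[x_1,y_1],y_{s-1}]+[[x_1,y_{s-1}],y_1]$ together with formula~(6), which yields $\alpha_{1,2}\beta_{s-1,2}=0$; this handles the subcase $\alpha_{1,2}\ne 0$, while a separate argument (pushing $y_2,\dots,y_m$ into $\mathcal R(\ll)$ and then showing all $\beta_{i,2}=0$) disposes of the subcase $\alpha_{1,2}=0$. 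In the second case, $\ll^3=\{x_3,\dots,x_n,y_2,\dots,y_m\}$, the block-starts $y_{m_1+1},y_{m_1+m_2+1},\dots$ lying in $\ll^3$ give a nonvanishing $k\times k$ determinant in the coefficients $\alpha_{i,\bullet}$, from which one forces $y_i\in\mathcal R(\ll)$ for $i\ge 2$ and then computes $[x_i,y_1]$ inductively to see that $y_2\notin\ll^3$ after all. Your outline treats neither the case $s=2$ (that is, $x_2\notin\ll^3$) nor the subcase $\alpha_{1,2}=0$; these are precisely the places where the hypothesis $m_1\le m-2$ enters, and the device you borrow from the single-block lemma does not survive them.
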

\begin{proof} Let us suppose the contrary, i.e. the nilindex of $\ll$ is equal
to $n+m.$ Then $dim \ll^{k}=n+m-k, \ 2 \leq k \leq n+m.$ In the
adapted basis $\{x_1, x_2, \dots, x_n, y_1, y_2, \dots, y_m\}$ we
have the products
$$\begin{array}{ll}
[x_i,x_1]=x_{i+1},&1\leq i\leq n-1,\\{}
[y_j,x_1]=y_{j+1},&j\notin\{m_1,m_1+m_2,\dots,m_1+m_2+\cdots+m_k\},\\{}
[y_j,y_1]=\beta_{j,2}x_2+\cdots+\beta_{j,n}x_n,&1\leq j\leq m,\\{}
[x_i,y_1]=\alpha_{i,2}y_2+\cdots+\alpha_{i,m}y_m,&1\leq i\leq n.
\end{array}$$

Suppose that $x_2\in \ll^3,$ i.e.
$\ll^3=\{x_2,\dots,x_n,y_3,\dots,y_m\}.$ Then the element $x_2$ is
generated from the products $[y_j,y_1]$, i.e. there exists $j_0 \
(2 \leq j_0 \leq m)$ such that in
$[y_{j_0},y_1]=\sum\limits_{i=2}^n \beta_{j_0,i}x_i$ the parameter
$\beta_{j_0,2}\neq 0.$

Taking the change of basis
$x'_1=\frac{1}{\beta_{j_0,2}}(\sum\limits_{s=2}^n
\beta_{j_0,s}x_{s-1})$ we can assume that $[y_{j_0},y_1]=x_2.$

The equalities
$$[y_{j_0},[y_1,y_1]]=2[[y_{j_0},y_1],y_1]=2[x_2,y_1].$$
and
$$[y_{j_0},[y_1,y_1]]=[y_{j_0},\sum\limits_{i=2}^n
\beta_{1,i}x_i]=0$$ imply $[x_2,y_1]=0.$

Since $\ll^3 = n+m-3$ we have  $\ll^3=\{x_2,$ $x_3,\dots,$ $x_n,$
$y_3,\dots,$ $y_{m_1},$
$A_{1,1}y_2+A_{1,2}y_{m_1+1}+\cdots+A_{1,k}y_{m_1+m_2+\cdots+m_{k-1}+1},$
$y_{m_1+2},\dots,$ $y_{m_1+m_2},$
$A_{2,1}y_2+A_{2,2}y_{m_1+1}+\cdots+A_{2,k}y_{m_1+m_2+\cdots+m_{k-1}+1},
\dots,$ $ y_{m_1+m_2+\cdots+m_{k-1}},\dots,$
$A_{k-1,1}y_2+A_{k-1,2}y_3+\cdots+A_{k-1,k}y_{m_1+m_2+\cdots+m_{k-1}+1},\dots,$
$y_m\}$

If there exists $i$ such that
$[x_i,y_1]=c_2(A_{i,1}y_2+A_{i,2}y_{m_1+1}+\cdots+A_{i,k}y_{m_1+\cdots+m_{k-1}+1})+
\sum\limits_{s\geq3}(*)y_s$ with $c_2\neq 0,$ then applying the
Leibniz superidentity for the elements $\{x_i, x_1, y_1\}$
inductively, we obtain $A_{i,1}=0$ for all $i \ (1\leq i \leq
k-1)$.

Thus, $$\ll^3=\{x_2,x_3,\dots,x_n,y_3,\dots,y_{m-1},y_m\}.$$

Consider the product
$[x_1,y_1]=\sum\limits_{i=2}^n\alpha_{1,i}y_i.$

\

\textbf{Case 1.} Let $\alpha_{1,2}\neq 0$. Let us suppose that
there exists $s$ from $3\leq s\leq m_1$ such that
$$\ll^s=\{x_2,\dots,x_n,y_s,\dots,y_m\},$$
$$\ll^{s+1}=\{x_3,\dots,x_n,y_s,\dots,y_m\}.$$
Note that $\beta_{s-1,2}\neq 0,$ because $x_2\in \ll^s \setminus
\ll^{s+1}.$ The equality $[x_2,y_1]=0$ implies that the element
$y_s$ must be generated by the products $[x_i,y_1]$ for $3 \leq i
\leq n.$ Therefore,
$$\ll^{s+2}=\{x_4,\dots,x_n,y_s,\dots,y_{m-1},y_m\}.$$

The parameters  $\beta_{s,2}, \ \beta_{s,3}=0$ are equal to zero
because of $y_s \in \ll^{s+2}.$ Thus, we have
$$[y_{s-1},y_1]=\sum\limits_{i=2}^n
\beta_{s-1,i}x_i \ \ \mbox{and} \ \ [y_s,y_1]=\sum\limits_{i=4}^n
\beta_{s,i}x_i.$$

From the equality (6) we get
$$[y_2,y_{s-1}]=(-1)^{s-1}\beta_{s-1,2}x_3+\sum\limits_{t\geq 4}(*)x_t.$$

The following chain of the equalities
$$0=[x_1, [y_1, y_{s-1}]]= [[x_1, y_1], y_{s-1}] +[[x_1, y_{s-1}],
y_1]=$$ $$[\sum\limits_{i=2}^n\alpha_{1,i}y_i,
y_{s-1}]+[\sum\limits_{i=s}^n\gamma_{1,i}y_i, y_1]= \alpha_{1,2}
\beta_{s-1,2}x_3+\sum\limits_{t\geq 4}(*)x_t$$ implies
$\alpha_{1,2} \beta_{s-1,2}=0$, that is a contradiction with the
assumption $s\leq m_1.$

Thus, we have $s>m_1.$ Then
$$\ll^{m_1}=\{x_2,\dots,x_n,y_{m_1},\dots,y_m\},$$
$$\ll^{m_1+1}=\{x_2,\dots,x_n,y_{m_1+1},\dots,y_m\}.$$
Since $\ll^{m_1+2}=[\ll^{m_1+1}, \ll]$ then using the equality (6)
we conclude that
$$\ll^{m_1+2}=\{x_3,\dots,x_n,y_{m_1+1},\dots,y_m\}.$$
Therefore, $s=m_1+1$ and
$[y_{m_1},y_1]=\sum\limits_{i-2}^n\beta_{m_1,2}x_2$ with
$\beta_{m_1,2}\neq 0$. From (6) we get
$[y_2,y_{m_1}]=\beta_{m_1,2}x_3+\sum\limits_{i\geq 4}x_i$ and
using the Leibniz superidentity we obtain $\beta_{m_1,2}=0,$ but
it is a contradiction. Hence this case is not possible.

\

\textbf{Case 2.} Let $\alpha_{1,2}=0.$ Then we have that
$$\begin{array}{l}
[x_1,y_1]=\alpha_{1,3}y_3+\cdots+\alpha_{1,m}y_m,\\{}
[y_1,x_1]=y_2,\\{}
[y_1,x_1]+[x_1,y_1]=y_2+\alpha_{1,3}y_3+\cdots+\alpha_{1,m}y_m\in
\mathcal{R}(\ll).
\end{array}$$

By the similar arguments as in the previous case, we obtain
$y_i\in \mathcal{R}(\ll) \ (2 \leq i \leq m).$ Applying the
Leibniz superidentity for the elements $\{y_{j-1}, x_1, y_1\}$  we
have

$$[y_j,y_1]=\sum\limits_{i=2}^{n+1-j}\beta_{j,i}x_{i+j-1}, \ 2 \leq j \leq m_1$$
which imlpies $\beta_{i,2}=0$ for all $2 \leq i \leq m_1.$

Since $y_{m_1+\dots+m_{s-1}+1}$ ($2 \leq s \leq k$) is generated
from the products $[x_t,y_1]$ ($1 \leq t \leq n$), then from the
equality
$$[[x_t,y_1],y_1]=\frac{1}{2}[x_t,[y_1,y_1]]=[x_t, \sum\limits_{l\geq
2}(*)x_l]=0
$$
and applying the Leibniz superidentity for the elements
$\{y_{j-1}, x_1, y_1\}$ we get $\beta_{i,2}=0$ for $2 \leq i \leq
m,$ but it is a contradiction to the assumption $x_2 \in \ll^3.$

Thus, we have $x_2\notin \ll^3,$ i.e.
$$\ll^3=\{x_3, x_4, \dots,x_n, y_2,y_3,\dots,y_m\}.$$
Since $\{y_2, y_{m_1}, y_{m_1+m_2}, \dots,
y_{m_1+m_2+\cdots+m_k}\}\in\ll^3,$ then there exist
$i_1,i_2,\dots,i_k\geq 2,$ such that
$$ \left |
\begin{array}{lllll}
\alpha_{i_1,2}&\alpha_{i_1,m_1+1}&\alpha_{i_1,m_1+m_2+1}&\cdots&\alpha_{i_1,m_1+m_2+\cdots+m_{k-1}+1}\\
\alpha_{i_2,2}&\alpha_{i_2,m_1+1}&\alpha_{i_2,m_1+m_2+1}&\cdots&\alpha_{i_2,m_1+m_2+\cdots+m_{k-1}+1}\\
\vdots&\vdots&\vdots& &\vdots\\{}
\alpha_{i_k,2}&\alpha_{i_k,m_1+1}&\alpha_{i_k,m_1+m_2+1}&\cdots&\alpha_{i_k,m_1+m_2+\cdots+m_{k-1}+1}
\end{array}\right |\neq 0 . \eqno(7)$$

Without loss of generality we can assume that $\alpha_{i_1,2}\neq
0$. Consider
$$[x_{i_1},y_1]+[y_1,x_{i_1}]=\sum\limits_{t=2}^m\alpha_{i_1,t} y_t \in
\mathcal{R}(L).$$ Then multiplying sufficiently times from the
right side to the element $x_1$ and taking into account the
condition (7) we obtain $y_i\in \mathcal{R}(\ll) \ (2\leq i\leq
m).$

Furthermore, proceeding with the bracket computing
$$\begin{array}{l}
[x_2,y_1]=[[x_1,y_1],x_1]=\alpha_{1,2}
y_3+\cdots+\alpha_{1,m-1}y_m,\\{}
[x_3,y_1]=[[x_2,y_1],x_1]=\alpha_{1,2}
y_4+\cdots+\alpha_{1,m-2}y_m,\\{} \qquad \qquad \qquad \vdots\\{}
[x_n,y_1]=[[x_{n-1},y_1],x_1]=\alpha_{1,2}
y_{n+1}+\cdots+\alpha_{1,n-m+1}y_m.
\end{array}$$
we obtain that $y_2\notin \ll^3.$

Thus, we get the contradictions in all considered cases, which
leads that the superalgebra $\ll$ with characteristic sequence
$C(\ll)=(n|m_1, m_2, \dots, m_k)$ and $m_1 \leq m-2$ has a
nilindex less than $n+m.$
\end{proof}
Combining the assertion of Theorem \ref{thm32} and the
classifications of the papers \cite{C-G-N-O}, \cite{G-N-O} we
complete the classification of Leibniz superalgebras with even
part zero-filiform Leibniz algebra and with nilindex equal to
$n+m.$

\end{document}